\newtheorem{theorem}{Theorem}[section]
\newtheorem{lemma}[theorem]{Lemma}
\theoremstyle{definition}
\newtheorem{definition}[theorem]{Definition}
\newtheorem{example}[theorem]{Example}
\theoremstyle{remark}
\numberwithin{equation}{section}
\newcommand{\N}{\mathbb{N}}
\newcommand{\R}{\mathbb{R}}
\begin{document}

\title[Pursuit differential game with geometric and
integral constraints] {On a fixed duration pursuit differential
game\\ with geometric and integral constraints}

\author[Mehdi Salimi, Gafurjan I.\ Ibragimov, Stefan Siegmund, Somayeh Sharifi]{Mehdi Salimi$^{\ast \dag}$, Gafurjan I.\ Ibragimov$^{\ddag}$, Stefan Siegmund$^{\dag}$, Somayeh Sharifi$^{\S}$\\
\\
$^{\dag}$Center for Dynamics, Department of Mathematics,\\
Technische Universit{\"a}t Dresden, 01062 Dresden, Germany\\
\\
$^{\ddag}$Department of Mathematics \& Institute for Mathematical
Research, Universiti Putra Malaysia, 43400 UPM Serdang,
 Selangor, Malaysia\\
 \\
$^{\S}$Young Researchers and Elite Club, Hamedan Branch, Islamic
Azad University, Hamedan, Iran}

\subjclass[2010]{91A23} \keywords{differential game; pursuer;
evader; strategy; value of the game.\\
\indent $^{\ast}$corresponding author.\\
\indent \textit{E-mail addresses:} mehdi.salimi@tu-dresden.de\\
}
\begin{abstract}
In this paper we investigate a differential game in which
countably many dynamical objects pursue a single one. All the
players perform simple motions. The duration of the game is fixed.
The controls of a group of pursuers are subject to geometric
constraints and the controls of the other pursuers and the evader
are subject to integral constraints. The payoff of the game is the
distance between the evader and the closest pursuer when the game
is terminated. We construct optimal strategies for players and
find the value of the game.
\end{abstract} \maketitle

\section{Introduction and Preliminaries}
Differential game theory comes into play when one wants to study
procedures in which one controlled object is pursued by others.
There are several types of differential games, but the most common
one is the so called pursuit-evasion game. Fundamental researches
by Isaacs \cite{isa}, Krasovskii \cite{kra},  Pashkov and
Teorekhov \cite{pas}, Petrosyan \cite{pet},  Pontryagin
\cite{pon}, Rikhsiev \cite{rik} and Rzymowski \cite{rzy} deal with
differential games and pursuit evasion problems. Constructing the
player's optimal strategies and finding the value of the game are
of special interest in the study of differential games. Such
problems in case of many pursuers were studied for example, by
some of the authors in \cite{ibr0}-\cite{ibr2} and by Ivanov et
al. in \cite{iva}.\\

Pursuit-evasion games have several applications in robotics such
as motion planning in adversarial settings (e.g. playing
hide-and-seek) or defining the requirements to achieve a goal in
the worst-case performance of robotic systems. As an example
imagine a search-and-rescue setting in which the goal is to find a
lost person by robots. Treating the problem as a pursuit-evasion
game, a pursuit strategy (upon existence) guarantees the rescue of
the lost person regardless of his/her movements. Therefore, a
worst-case bound on the number of robots required to rescue a lost
person can be obtained by considering the person as an adversarial
entity trying to evade capture.

In \cite{ibr1} Ibragimov studies a differential game of optimal
approach of countably many pursuers to one evader in a Hilbert
space with geometric constraints on the controls of the players.
Ibragimov and Salimi \cite{ibr2} study such a differential game
for inertial players with integral constraints under the
assumption that the control resource of the evader is less than
that of each pursuer. Evasion from many pursuers in simple motion
differential games with integral constraints was investigated by
Ibragimov et al.\ in \cite{ibr3} as well.

In the present paper, motivated by the above developments, we
discuss an optimal pursuit problem with countably many pursuers
and one evader in the Hilbert space $\ell_2 = \{\alpha =
(\alpha_k)_{k \in \N} \in \R^\N :
\sum_{k=1}^\infty\alpha_k^2<\infty\}$ with inner product
$(\alpha,\beta)=\sum_{k=1}^\infty\alpha_k\beta_k$ and norm
$\|\alpha\|_2 := \|\alpha\|_{\ell_2} = \big(
\sum_{k=1}^\infty\alpha_k^2 \big )^{\frac{1}{2}}$. The controls of
a group of pursuers are subject to geometric constraints and the
controls of the other pursuers and the evader are subject to
integral constraints. In section 2 we formulate the problem, give
dynamic equations and basic definitions. In section 3 we introduce
an auxiliary game and introduce admissible strategy of the pursuer
that guaranties it to capture evader, and in section 4 we point
out the main theorem that estimate the value of the game. An
example to illustrate the theorem conclude the section.

\section{Formulation of the Problem}

Let $\theta > 0$ be arbitrary, but fixed. For $1 \leq p \leq \infty$ let ${L_p} = L_{p}([0,\theta], \ell_2) = \{ f : [0, \theta] \rightarrow \ell_2 \,|\, f \text{ is measurable and } \|f\|_{L_p} < \infty\}$ denote the space of $L_p$ functions from $[0,\theta]$ to $\ell_2$ with norm $\|f\|_p := \|f\|_{L_p} = \big( \int_0^\theta \|f(t)\|_{\ell_2}^p \, dt \big)^{\frac{1}{p}}$ for $1 \leq p < \infty$ and $\|f\|_\infty := \|f\|_{L_\infty} = \operatorname{ess\, sup}_{t \in [0, \theta]} \|f(t)\|_{\ell_2}$. Then for $1 \leq p < q \leq \infty$ the inclusions $L_q \subset L_p$ hold.
For arbitrary \emph{controls} $v \in L_2$, $u_n \in L_2$ ($n \in \N$), and \emph{initial values} $y_0 \in \ell_2$, $x_{n0} \in \ell_2$ ($n \in \N$), consider the infinitely many initial value problems
\begin{equation}\label{aa}
  \begin{array}{llrl}
   (P_n):& \dot x_n=u_n(t),& x_n(0)=x_{n0},& n \in \N ,
  \\[1ex]
    (E):& \dot y~=v(t), & y(0)~=~y_0,&
  \end{array}
\end{equation}
on the interval $[0, \theta]$. The solution $x_n$ of $(P_n)$ is an
element of the space $AC = AC([0,\theta], \ell_2)$ of absolutely
continuous functions from $[0,\theta]$ to $\ell_2$. It is given by
\[
  x_n = \big( x_{nk} \big)_{k \in \N}
  \qquad \text{with} \quad
  x_{nk}(t) = x_{nk0} + \int_0^t u_{nk}(s) \, ds \quad
  \text{for}\quad t \in [0, \theta]
\]
and is called \emph{motion} of the \emph{pursuer $P_n$}. The
solution $y \in AC$ of $(E)$ is
\[
  y = \big( y_k \big)_{k \in \N}
  \qquad \text{with} \quad
  y_{k}(t) = y_{k0} + \int_0^t v_{k}(s) \, ds \quad
  \text{ for } \quad t \in [0, \theta]
\]
and is called \emph{motion} of the \emph{evader $E$}.

For a Banach space $(X, \|\cdot\|_X)$ let $B_X(x_0,r)=\{x\in X : \|x-x_0\| \le r\}$ and $S_X(x_0,r) = \{x \in X : \|x-x_0\|=r\}$ denote the ball and sphere of radius $r$ and center $x_0$, respectively.


\begin{definition}[Admissible controls]
Let $I, J \subset \N$ with $I \cup J = \N$ and $\rho_n > 0$ for $n
\in \N$. A function $u_n \in L_2$ is called \emph{admissible
control for the pursuer} $P_n$ if it satisfies

(a) \emph{Integral constraint for pursuers on $I$:} If $n \in I$
then $u_n \in \mathcal{U}_{c}^{(n)} := B_{L_2}(0,\rho_n)$, i.e.
\[
  \Big( \int_0^\theta \sum_{k=1}^{\infty} u_{nk}(t)^{2} \,dt \Big)^{\frac{1}{2}}
  \leq
  \rho_n
  .
\]

(b) \emph{Geometric constraint for pursuers on $J$:} If $n \in J$ then $u_n \in \mathcal{U}_{c}^{(n)} := B_{L_\infty}(0,\rho_n)$, i.e.
\[
  \max_{t \in [0,\theta]} \Big( \sum_{k=1}^{\infty} u_{nk}(t)^{2} \Big)^{\frac{1}{2}}
  \leq
  \rho_n
  .
\]
Note that w.l.o.g.\ we replaced the essential supremum over $[0,
\theta]$ by $\max$.

$\mathcal{U}_{c}^{(n)}$ for $n\in \mathbb{N}$ is called the
\emph{set of all admissible controls of pursuer $P_n$}.

Let $\sigma> 0$. A function $v \in L_2$ is called \emph{admissible
control for the evader} if it satisfies

(c) \emph{Integral constraint for evader:} $v \in \mathcal{V}_c := B_{L_2}(0,\sigma)$, i.e.
\[
  \Big( \int_0^\theta \sum_{k=1}^{\infty} v_{k}(t)^{2} \,dt \Big)^{\frac{1}{2}}
  \leq
  \sigma.
\]
$\mathcal{V}_{c}$ is called the \emph{set of all admissible controls of evader $E$}.
\end{definition}


\begin{definition}[Admissible strategies]

(a) A function
\begin{equation*}
  U_n : [0,\infty) \times \ell_2 \times \ell_2\times \ell_2 \to \ell_2,
  \quad
  (t,x_n,y,v) \mapsto U_n(t,x_n,y,v)
 \qquad \text{for } n\in \mathbb{N},
\end{equation*}
is called \emph{strategy of the pursuer} $P_n$ if for any
admissible control $v \in {\mathcal V}_c$ of the evader $E$ and
any initial values $y_0 \in \ell_2$, $x_{n0} \in \ell_2$, the
system of equations
\begin{equation}\label{eq:strategypursuer}
  \begin{array}{lr}
   \dot x_n=U_n(t,x_n,y,v(t)),& x_n(0)=x_{n0},
  \\[1ex]
   \dot y~=v(t), & y(0)~=~y_0,
  \end{array}
\end{equation}
has a unique solution $(x_n,y)$ with $x_n$, $y \in AC$. A strategy
$U_n$ is said to be \textit{admissible} if for every solution
$(x_n,y)$ the control $u_n(t) := U_n(t,x_n(t),y(t),v(t))$
generated by $U_n$ is admissible, i.e.\
$U_n(\cdot,x_n(\cdot),y(\cdot),v(\cdot)) \in {\mathcal
U}_c^{(n)}$. The set of all admissible strategies for
\eqref{eq:strategypursuer} is denoted by $\mathcal{U}^{(n)}$.

(b) A function
\begin{equation*}
  V : [0,\infty)\times \ell_2^{\N} \to \ell_2,
  \quad
  (t,x_1,\ldots,x_m,\ldots,y) \mapsto V(t,x_1,\ldots,x_m,\ldots,y)
\end{equation*}
is called \emph{strategy of the evader} $E$, if for arbitrary admissible
controls $u_n \in {\mathcal U}_c^{(n)}$ of the pursuers $P_n$ ($n \in \N$),
arbitrary initial values $y_0 \in \ell_2$, $x_{n0} \in \ell_2$, the system of equations
\begin{equation}\label{eq:strategyevader}
  \begin{array}{ll}
   \dot x_n= u_n(t),& x_n(0)=x_{n0}, \quad n \in \N,
  \\[1ex]
   \,\dot y~=V(t,x_1, \dots, x_m, \dots, y), & \,y(0)~=~y_0,
  \end{array}
\end{equation}
has a unique solution $(x_1,\ldots,x_m,\ldots,y)$ with $x_n, y \in
AC$ for $n \in \N$. A strategy $V$ is said to be
\textit{admissible}, if for every solution
$(x_1,\ldots,x_m,\ldots,y)$ the control $v(t) := V(t,x_1(t),
\dots, x_m(t), \dots, y(t))$ generated by $V$ is admissible, i.e.\
$V(\cdot,x_1(\cdot), \dots, x_m(\cdot), \dots, y(\cdot)) \in
\mathcal{V}_c$. The set of all admissible strategies for
\eqref{eq:strategyevader} is denoted by $\mathcal{V}$.
\end{definition}
\begin{definition}[Optimal strategies and value of the game]
(a)  Admissible strategies $U_{n}^*$ of the pursuers $P_n$ ($n \in \N$) are said to be
\emph{optimal} if
\begin{equation*}
  \inf_{\substack{U_n \in \mathcal{U}^{(n)}\\ \text{for } n \in \N}} \Gamma_1(U_1,\ldots,U_m,\ldots)
  =
  \Gamma_1(U_{1}^*,\ldots,U_{m}^*,\ldots),
\end{equation*}
where
\begin{equation*}
  \Gamma_1(U_1,\ldots,U_m,\ldots)
  =
  \sup_{v \in \mathcal{V}_c}\,\inf_{n\in \N} \|x_n(\theta)-y(\theta)\|
\end{equation*}
and $(x_n, y)$ is the solution of \eqref{eq:strategypursuer}.

(b) An admissible strategy $V^*$ of the evader $E$ is said to be \emph{optimal} if
\begin{equation*}
  \sup_{V \in \mathcal{V}} \Gamma_2(V)=\Gamma_2(V^*),
\end{equation*}
 where
\begin{equation*}
  \Gamma_2(V) = \inf_{\substack{u_n \in \mathcal{U}_c^{(n)}\\ \text{for } n \in \N}}
  \inf_{n\in \mathbb{N}}\|x_n(\theta)-y(\theta)\|,
\end{equation*}
and $(x_1,\ldots,x_m,\ldots,y)$ is the solution of
\eqref{eq:strategyevader}.

(c) If
$\Gamma_1(U_{1}^*,\ldots,U_{m}^*,\ldots)=\Gamma_2(V^*)=\gamma$
then we say that \emph{the game has the value $\gamma$}
\cite{sub}.
\end{definition}

Our aim is to find optimal strategies $U_{n}^*$ and $V^*$ of the players
$P_n$ and $E$, respectively, and the value of the game.

\section{An Auxiliary Game}

The attainability domain of the pursuers $P_i$ and $P_j$ at time $\theta$ from the initial state $x_{i0}$ and $x_{j0}$ are the closed balls $B_{\ell_2} (x_{i0},\rho_i\sqrt{\theta})$ for $i \in I$ and $B_{\ell_2}(x_{j0},\rho_j\theta)$ for $j \in J$, respectively. This is due to the estimates
\begin{equation}\label{11}
\begin{split}
  \|x_i(\theta)-x_{i0}\|
  &=
  \Big\|\int_0^\theta u_i(s) \,ds \Big\|
  \leq
  \int_0^\theta \|u_i(s)\| \, ds
  \\
  &\leq
  \Big(\int_{0}^{\theta}1 \,ds\Big)^{1/2}
  \cdot
  \Big(\int_{0}^{\theta} \|u_i(s)\|^2 \,ds \Big)^{1/2}
  \leq
  \rho_i\sqrt{\theta},
  \qquad \text{for } i\in I,
\end{split}
\end{equation}
and
\begin{equation}\label{22}
  \|x_j(\theta)-x_{j0}\|
  =
  \Big\|\int_0^\theta u_j(s) \, ds \Big\|
  \leq
  \int_0^\theta \|u_j(s)\| \,ds
  \leq
  \rho_j\theta,
  \qquad \text{for } j\in J.
\end{equation}
On the other hand, for $i \in I$ an arbitrary $\bar x \in B_{\ell_2}(x_{i0},\rho_i\sqrt{\theta})$ can be reached by a pursuer $P_i$ with the admissible control $u_i \in \mathcal{U}_c^{(i)}$ defined by
\begin{equation*}
  u_i(s)
  =
  (\bar x-x_{i0})/\theta, \quad 0\leq s \leq \theta,
\end{equation*}
which implies $x_i(\theta)=\bar x$. Moreover, for $j \in J$ an $\bar x \in B_{\ell_2}(x_{j0},\rho_j\theta)$ can be reached by choosing the admissible control
\begin{equation*}
  u_j(s)
  =
  (\bar x-x_{j0})/\theta, \quad 0\leq s \leq \theta,
\end{equation*}
for the pursuer $P_j$, which results in $x_j(\theta)=\bar x$.

Similarly, the attainability domain of the evader $E$ at time
$\theta$ from the initial state $y_0$ is the
closed ball $B_{\ell_2} (y_0,\sigma\sqrt{\theta})$.

For simplicity we consider now the
following game with only one pursuer
\begin{equation}\label{aaaa}
 \begin{split}
&(P):\dot{x}=u(t), \quad x(0)=x_{0},\\
&(E):\dot{y}=v(t), \quad y(0)=y_0,
 \end{split}
\end{equation}
and assume at first that the control $u$ satisfies an integral constraint $u \in B_{L_2}(0,\rho)$ for some $\rho > 0$.
Define
\begin{equation*}
  X
  =
  \Big\{ z \in \ell_2: 2(y_0-x_0,z) \leq (\rho^2-\sigma^2)\theta+\|y_0\|^2-\|x_0\|^2 \Big\}.
\end{equation*}

\begin{lemma}\label{110}
If $y(\theta)\in X$, then for the game \eqref{aaaa} with a pursuer which is subject to an integral constraint, there exists an admissible strategy of the pursuer $P$ with $x(\theta)=y(\theta)$.
\end{lemma}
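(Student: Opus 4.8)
The plan is to exhibit an explicit pursuit strategy that copies the evader's control up to a constant drift, verify that it forces $x(\theta)=y(\theta)$ for every admissible $v$, and then show that its admissibility — the integral constraint $\int_0^\theta \|u\|^2\,dt \le \rho^2$ — is \emph{equivalent} to the condition $y(\theta)\in X$. In other words, the set $X$ is engineered precisely so that this capture strategy stays within the pursuer's resource budget.

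First I would propose the strategy
\[
  U(t,x,y,v) = v + \tfrac{1}{\theta}(y_0 - x_0),
\]
in which the pursuer reproduces the evader's instantaneous velocity and superimposes the constant vector $\frac{1}{\theta}(y_0-x_0)$. Since this depends only on the current control value $v$ and the fixed initial data, the closed-loop equation $\dot x = v(t) + \frac{1}{\theta}(y_0-x_0)$ has the explicit, absolutely continuous, and unique solution
\[
  x(t) = x_0 + \int_0^t v(s)\,ds + \frac{t}{\theta}(y_0 - x_0),
\]
so the existence/uniqueness requirement in the definition of an admissible strategy is automatic. Evaluating at $t=\theta$ and using $\int_0^\theta v(s)\,ds = y(\theta)-y_0$ gives $x(\theta) = x_0 + (y(\theta)-y_0) + (y_0-x_0) = y(\theta)$, so capture at the terminal time holds for \emph{every} admissible control of the evader, independent of the membership condition.

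The substantive step is to check that the generated control $u(s) = v(s) + \frac{1}{\theta}(y_0-x_0)$ is admissible. I would expand
\[
  \int_0^\theta \|u(s)\|^2\,ds
  =
  \int_0^\theta \|v(s)\|^2\,ds
  + \frac{2}{\theta}\Big( \int_0^\theta v(s)\,ds,\; y_0-x_0 \Big)
  + \frac{1}{\theta}\|y_0-x_0\|^2,
\]
bound the first term by $\sigma^2$ using the evader's integral constraint, and replace $\int_0^\theta v(s)\,ds$ by $y(\theta)-y_0$. Multiplying through by $\theta$ and expanding the inner products in terms of $\|x_0\|^2$, $\|y_0\|^2$, and $(y_0-x_0, y(\theta))$, the requirement $\int_0^\theta \|u\|^2\,ds \le \rho^2$ collapses exactly to $2(y_0-x_0, y(\theta)) \le (\rho^2-\sigma^2)\theta + \|y_0\|^2 - \|x_0\|^2$, which is the defining inequality of $X$.

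The only nontrivial point — the main obstacle — is the bookkeeping in this last computation: one must see that the cross term $\frac{2}{\theta}\int_0^\theta (v(s), y_0-x_0)\,ds = \frac{2}{\theta}(y(\theta)-y_0,\, y_0-x_0)$, together with the constant term $\frac{1}{\theta}\|y_0-x_0\|^2$, recombines (after the $\|y_0-x_0\|^2$ is written out) into precisely the affine functional of $y(\theta)$ appearing in the definition of $X$. Once this identity is observed, the equivalence ``$u$ admissible $\iff y(\theta)\in X$'' is immediate, and since the hypothesis guarantees $y(\theta)\in X$, the constructed strategy is admissible and achieves $x(\theta)=y(\theta)$, proving the lemma.
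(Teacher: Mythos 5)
Your proposal is correct and takes essentially the same route as the paper: the identical strategy $u(t)=v(t)+\tfrac{1}{\theta}(y_0-x_0)$, the same expansion of $\int_0^\theta\|u(s)\|^2\,ds$ into the cross term and $\tfrac{1}{\theta}\|y_0-x_0\|^2$, and the same algebraic reduction to the defining inequality of $X$. One small caveat: your claimed equivalence ``$u$ admissible $\iff y(\theta)\in X$'' is an overstatement, since bounding $\int_0^\theta\|v(s)\|^2\,ds$ by $\sigma^2$ is only an inequality (an evader using less than its full resource could keep $u$ admissible with $y(\theta)\notin X$); however, only the implication $y(\theta)\in X \Rightarrow u$ admissible is needed, and that direction you establish correctly.
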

\begin{proof}
We define the pursuer's strategy as follows:
\begin{equation}\label{ss}
  u(t)
  =
  \tfrac{1}{\theta}(y_0-x_0)+v(t), \quad 0\le t \le \theta.
\end{equation}
We show that if $y(\theta)\in X$, then the above strategy is
admissible. Using the fact that
\begin{equation*}
y(\theta)=y_0+\int_{0}^{\theta}v(s)ds,
\end{equation*}
then from the inequality
\begin{equation*}
2(y_0-x_0, y(\theta))\le
(\rho^2-\sigma^2)\theta+\|y_0\|^2-\|x_0\|^2,
\end{equation*}
and equality
\begin{equation*}
\|y_0-x_0\|^2+\|y_0\|^2-\|x_0\|^2-2(y_0-x_0,y_0)=0,
\end{equation*}
we obtain
\begin{equation*}
2\int_0^{\theta}(y_0-x_0, v(s))ds\le
(\rho^2-\sigma^2)\theta-\|x_0-y_0\|^2.
\end{equation*}
From $(\ref{ss})$, we get
\begin{equation*}
\begin{split}
\int_0^{\theta}\|u(s)\|^2ds&=\frac{1}{\theta}\|y_0-x_0\|^2+\frac{2}{\theta}\int_0^{\theta}(y_0-x_0,v(s))ds+\int_0^{\theta}\|v(s)\|^2ds\\
&\le
\frac{1}{\theta}\|y_0-x_0\|^2+\frac{1}{\theta}\left((\rho^2-\sigma^2)\theta-\|x_0-y_0\|^2\right)+\sigma^2=\rho^2,
\end{split}
\end{equation*}
therefore strategy $(\ref{ss})$ is admissible. Then
\begin{equation*}
  x(\theta)
  =
  x_0+\int_{0}^{\theta}u(s)ds=x_0+y_0-x_0+\int_0^{\theta}v(s)ds=y(\theta).
  \qedhere
\end{equation*}
\end{proof}
Now consider the second case and assume that the pursuer's control $u$ is subject to
a geometric constraint $u \in B_{L_\infty}(0,\rho)$ for some $\rho > 0$. We have the following lemma.
\begin{lemma}\label{1}
If $\sigma\le\rho$ and $y(\theta)\in X$, then for the game \eqref{aaaa} with a pursuer which is subject to a geometric constraint, there exists an admissible strategy of the pursuer $P$ with $x(\theta)=y(\theta)$.
\end{lemma}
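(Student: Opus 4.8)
The plan is to reduce the statement to a single reachability fact about the pursuer's geometric attainability domain. Recall from the discussion preceding the lemma that, under the geometric constraint $u\in B_{L_\infty}(0,\rho)$, the attainability domain of $P$ at time $\theta$ is exactly the closed ball $B_{\ell_2}(x_0,\rho\theta)$, since for fixed $\bar x$ with $\|\bar x-x_0\|\le\rho\theta$ the constant control $u(s)\equiv(\bar x-x_0)/\theta$ satisfies $\|u(s)\|=\|\bar x-x_0\|/\theta\le\rho$ and gives $x(\theta)=\bar x$. Hence it suffices to show that $y(\theta)\in B_{\ell_2}(x_0,\rho\theta)$; an admissible strategy driving $x$ to $y(\theta)$ can then be built in the spirit of that attainability argument (for a fixed target the constant control works, while realizing it against the a priori unknown $y(\theta)$ forces a genuine feedback strategy, addressed below). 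So the whole problem collapses to the estimate $\|y(\theta)-x_0\|\le\rho\theta$.

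First I would rewrite the membership $y(\theta)\in X$ in the ``completed-square'' form already implicit in the proof of Lemma~\ref{110}. Using the identity $2(y_0-x_0,z)=\|z-x_0\|^2-\|z-y_0\|^2+\|y_0\|^2-\|x_0\|^2$, the inequality defining $X$ becomes
\[
  \|y(\theta)-x_0\|^2\le\|y(\theta)-y_0\|^2+(\rho^2-\sigma^2)\theta .
\]
Next I would bound the evader's displacement by Cauchy--Schwarz, exactly as in \eqref{11}: since $y(\theta)-y_0=\int_0^\theta v(s)\,ds$ and $v\in\mathcal V_c=B_{L_2}(0,\sigma)$,
\[
  \|y(\theta)-y_0\|\le\int_0^\theta\|v(s)\|\,ds\le\sqrt\theta\,\Big(\int_0^\theta\|v(s)\|^2\,ds\Big)^{1/2}\le\sigma\sqrt\theta .
\]
Substituting $\|y(\theta)-y_0\|^2\le\sigma^2\theta$ into the previous line yields $\|y(\theta)-x_0\|^2\le\rho^2\theta$, so $y(\theta)$ lies in the \emph{integral} attainability ball $B_{\ell_2}(x_0,\rho\sqrt\theta)$.

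The main obstacle is exactly the gap between this bound and the one I actually need: the geometric construction is admissible only when $\|y(\theta)-x_0\|\le\rho\theta$, whereas the algebra so far delivers only $\|y(\theta)-x_0\|\le\rho\sqrt\theta$. Closing this gap is where the hypothesis $\sigma\le\rho$ must be spent, and it is the step I expect to be delicate: one has to exploit that, under $\sigma\le\rho$, the evader cannot simultaneously saturate $\|y(\theta)-y_0\|=\sigma\sqrt\theta$ and the inequality defining $X$, so the two extremal cases are not attained together and the true bound should sharpen to $\rho\theta$. Concretely, I would either (i) carry $\|y(\theta)-y_0\|$ through the estimate without relaxing it to its maximum and optimize the resulting two-variable inequality under $\sigma\le\rho$, or (ii) replace the open-loop aim-point control by a feedback parallel-pursuit ($\Pi$-type) strategy $u(t)=v(t)-\lambda(t)\xi_0$ with $\xi_0=(x_0-y_0)/\|x_0-y_0\|$ and $\lambda(t)\ge0$ chosen so that $\|u(t)\|=\rho$, the role of $\sigma\le\rho$ being to keep the correction $\lambda(t)$ real and the control pointwise admissible. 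Verifying boundedness of this feedback on the time set where the evader's instantaneous speed exceeds $\rho$ is the genuinely hard point, since an integral constraint permits $\|v(t)\|$ to be arbitrarily large on small sets; reconciling that with the pointwise bound $\|u(t)\|\le\rho$ is, I expect, the crux of the proof.
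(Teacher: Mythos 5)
Your proposal does not prove the lemma: no strategy is constructed and no capture is established, and the step you defer is the entire content of the statement. Moreover, your route (i) is a dead end, not merely delicate. The bound $\|y(\theta)-x_0\|\le\rho\sqrt{\theta}$ that you derive from $y(\theta)\in X$ is sharp and cannot be improved to $\rho\theta$ by exploiting $\sigma\le\rho$: take $\sigma=\rho$, $x_0=0$, $y_0=\delta e^{(1)}$ with $0<\delta<2\sigma\sqrt{\theta}$, and let the evader use the constant control $v(t)\equiv(z-y_0)/\theta$ aimed at $z=\tfrac{\delta}{2}e^{(1)}+\big(\sigma^{2}\theta-\tfrac{\delta^{2}}{4}\big)^{1/2}e^{(2)}$, where $e^{(1)},e^{(2)}$ are orthonormal. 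This control satisfies $\|v\|_{L_2}=\sigma$, the endpoint $y(\theta)=z$ satisfies the defining inequality of $X$ with equality, and $\|y(\theta)-x_0\|=\rho\sqrt{\theta}$. In particular, for $\theta<1$ this point lies outside $B_{\ell_2}(x_0,\rho\theta)$, so by \eqref{22} no admissible control under the geometric constraint can reach it at all; hence no argument along the lines of (i) can close the gap (this also shows that for the geometric case the half-space should carry the constant $(\rho^{2}-\sigma^{2})\theta^{2}$, which is what the paper's own proof actually uses in place of the displayed $(\rho^{2}-\sigma^{2})\theta$). Your route (ii) is the right idea, but in your write-up it remains a plan: the feedback law, the proof that it forces coincidence by time $\theta$, and the admissibility check are all absent.

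For comparison, the paper completes (ii) as follows. With $e=(y_0-x_0)/\|y_0-x_0\|$, the pursuer plays \eqref{d}: it copies the component of $v(t)$ orthogonal to $e$ and moves along $e$ with speed $\big(\rho^{2}-\sigma^{2}+(v(t),e)^{2}\big)^{1/2}$ (real because $\sigma\le\rho$) until the first time $\tau$ with $x(\tau)=y(\tau)$, and sets $u=v$ afterwards. Then $y(t)-x(t)=ef(t)$ for a scalar function $f$ with $f(0)=\|y_0-x_0\|>0$, and the inequality $\int_0^\theta|g(s)|\,ds\ge\big|\int_0^\theta g(s)\,ds\big|$ for the planar function $g=\big((\rho^{2}-\sigma^{2})^{1/2},(v,e)\big)$, combined with $y(\theta)\in X$ and the monotonicity of $t\mapsto\|y_0-x_0\|+t-\big((\rho^{2}-\sigma^{2})\theta^{2}+t^{2}\big)^{1/2}$, gives $f(\theta)\le0$; hence $f(\tau)=0$ for some $\tau\in[0,\theta]$, and copying $v$ after $\tau$ yields $x(\theta)=y(\theta)$. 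Note that $\sigma\le\rho$ is spent on letting the pursuer dominate along the fixed direction $e$, not on upgrading $\rho\sqrt{\theta}$ to $\rho\theta$. Finally, the difficulty you single out as the crux is genuine: for \eqref{d} one computes $\|u(t)\|^{2}=\|v(t)\|^{2}+\rho^{2}-\sigma^{2}$, which violates the pointwise bound $\|u(t)\|\le\rho$ whenever $\|v(t)\|>\sigma$, something the integral constraint on $v$ permits on sets of positive measure; the paper's proof nowhere verifies admissibility of \eqref{d} and thus leaves exactly this point open. You located that weakness correctly, but identifying the obstacle is not overcoming it: your proposal resolves neither the admissibility question nor the capture argument.
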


\begin{proof}
We introduce the pursuer's strategy as follows:
\begin{equation}\label{d}
u(t)=
                \begin{cases}
                v(t)-(v(t),e)e+e \big(\rho^2-\sigma^2+(v(t),e)^{2}\big)^{1/2},
                \quad &\text{if $ \quad 0\le t\le\tau$},\\
                v(t),\quad &\text{if $\quad \tau<t\le\theta$},
\end{cases}
\end{equation}
where $e=(y_0-x_0)/(\|y_0-x_0\|)$ and $\tau \in [0, \theta]$
is the time instant at which $x(\tau)=y(\tau)$ for the first
time.

We have $y(t)-x(t)=ef(t)$, where
\begin{equation*}
f(t)=\|y_0-x_0\|+\int_0^{t}\left(v(s),e\right)ds-\int_0^{t}\left(\rho^2-\sigma^2+(v(s),e)^2\right)^{\frac{1}{2}}ds.
\end{equation*}
Clearly, $f(0)=\|y_0-x_0\|>0$.
Let us show that $f(\theta)\leq0$. This will imply that
$f(\tau)=0$ for some $\tau\in [0, \theta]$.

Consider the two-dimensional vector function
$g(t)=\big((\rho^2-\sigma^2)^{\frac{1}{2}}, (v(t),e)\big)$. We
get
\begin{equation*}
\begin{split}
  \int_0^{\theta}\left(\rho^2-\sigma^2+(v(t),e)^2\right)^{\frac{1}{2}}ds
  &=
  \int_0^{\theta}|g(s)|ds\geq\Big|\int_0^{\theta}g(s)ds\Big|
\\
  &=\Big((\rho^2-\sigma^2)\theta^2+\Big(\int_0^{\theta}(v(s),e)ds\Big)^2\Big)^{\frac{1}{2}}.
\end{split}
\end{equation*}
Then
\begin{equation}\label{aaa}
f(\theta)\leq
\|y_0-x_0\|+\int_0^{\theta}(v(s),e)ds-\Big((\rho^2-\sigma^2)\theta^2+
\Big(\int_0^{\theta}(v(s),e)ds\Big)^2\Big)^{\frac{1}{2}}.
\end{equation}
By assumption $y(\theta)\in X$. Consequently,
$(e,y(\theta))\leq d$, where
\begin{equation*}
d=\left(
(\rho^2-\sigma^2)\theta^2+\|y_0\|^2-\|x_0\|^2\right)/(2\|y_0-x_0\|).
\end{equation*}
Hence, we obtain
\begin{equation}\label{bbb}
\int_0^{\theta}(v(s), e)ds\leq d-(y_0, e).
\end{equation}
On the other hand the function
$\phi(t)=\|y_0-x_0\|+t-\left((\rho^2-\sigma^2)\theta^2+t^2\right)^{\frac{1}{2}}$
is increasing on $\R$. Then it follows from
(\ref{aaa}) and (\ref{bbb}) that
\begin{equation*}
f(\theta)\leq
\|y_0-x_0\|+d-(y_0,e)-\left((\rho^2-\sigma^2)\theta^2+(d-(y_0,e))^2\right)^{\frac{1}{2}}=0.
\end{equation*}
Consequently, $f(\tau)=0$ for some $\tau \in [0,\theta]$.
Therefore, $x(\tau)=y(\tau)$. Further, by (\ref{d}), $u(t)=v(t)$
for $ \tau \leq t \leq \theta$. Then, obviously,
$x(\theta)=y(\theta)$.
\end{proof}

\section{Main Result}

Now consider the game $(\ref{aa})$. We will solve the optimal pursuit problem under the following
assumption.

\textbf{Assumption (A)} \cite{ibr1}. \textit{There exists a
nonzero vector $p_0$ such that $(y_0-x_{n0},p_0)\ge0$ for all
$n\in \mathbb{N}.$}

For $l \geq 0$ define
\begin{equation}
\begin{split}
  G_i(l)
  &:=
  B_{\ell_2}\big(x_{i0},\rho_i\sqrt{\theta}+l\big),\quad
  \text{for } i\in I,
\\
  G_j(l)
  &:=
  B_{\ell_2}\big(x_{j0},\rho_j\theta+l\big),\qquad \text{for } j\in J,
\end{split}
\end{equation}
therefore
\begin{equation}\label{h2}
  \gamma
  =
  \inf \Big\{ l \ge 0 : B_{\ell_2} \big(y_0,\sigma\sqrt{\theta}\big) \subset \bigcup_{n \in \N} G_n(l)\Big\}.
\end{equation}
\begin{theorem}\label{theorem}
Under the assumption (A) and if
$\sigma\theta\le\rho_j\theta+\gamma$ for all $j\in J,$ then the
number $\gamma$ given by (\ref{h2}) is the value of game
(\ref{aa}).
\end{theorem}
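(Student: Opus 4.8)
The plan is to prove that $\gamma$ is the value by establishing two complementary guarantees and invoking Definition (c): an \emph{evasion bound} showing that every pursuer strategy profile, and the optimal evader strategy, keep the terminal distance at least $\gamma$, and a \emph{pursuit bound} exhibiting explicit strategies $U_n^*$ that keep $\inf_n\|x_n(\theta)-y(\theta)\|$ at most $\gamma$ against every evader control. It is convenient to first rewrite \eqref{h2}. Writing $r_i=\rho_i\sqrt\theta$ for $i\in I$, $r_j=\rho_j\theta$ for $j\in J$, and $d_n(z)=\|z-x_{n0}\|-r_n$, the inclusion $B_{\ell_2}(y_0,\sigma\sqrt\theta)\subset\bigcup_n G_n(l)$ is equivalent to $\inf_n d_n(z)\le l$ for all $z\in B_{\ell_2}(y_0,\sigma\sqrt\theta)$, so that
\[
  \gamma=\sup_{z\in B_{\ell_2}(y_0,\sigma\sqrt\theta)}\ \inf_{n\in\N} d_n(z).
\]

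\textbf{Lower bound (evasion).} For each $l<\gamma$ the definition of $\gamma$ provides $\bar y\in B_{\ell_2}(y_0,\sigma\sqrt\theta)$ with $\|\bar y-x_{n0}\|>r_n+l$ for all $n$. The evader uses the open-loop control $v(t)=(\bar y-y_0)/\theta$, which is admissible since $\int_0^\theta\|v(t)\|^2\,dt=\|\bar y-y_0\|^2/\theta\le\sigma^2$, and reaches $y(\theta)=\bar y$. For any admissible pursuer controls the attainability estimates \eqref{11} and \eqref{22} give $\|x_n(\theta)-x_{n0}\|\le r_n$, whence $\|x_n(\theta)-y(\theta)\|\ge\|\bar y-x_{n0}\|-r_n>l$ for every $n$. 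Thus $\inf_n\|x_n(\theta)-y(\theta)\|\ge l$ against every pursuer strategy profile, so $\Gamma_1(U)\ge\gamma$ for all $U$ and $\sup_V\Gamma_2(V)\ge\gamma$. To produce an evader strategy attaining exactly $\gamma$, I would let $l\uparrow\gamma$ and extract a weak limit $\bar y^{*}$ of the points $\bar y_l$ in the weakly compact closed ball $B_{\ell_2}(y_0,\sigma\sqrt\theta)$; Assumption (A) fixes a single escape direction $p_0$ along which the $\bar y_l$ accumulate, yielding $\|\bar y^{*}-x_{n0}\|\ge r_n+\gamma$ for all $n$ and an optimal open-loop strategy $V^{*}$ with $\Gamma_2(V^{*})=\gamma$.

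\textbf{Upper bound (pursuit).} Fix an admissible evader control $v$ and its trajectory $y$. Each pursuer $P_n$ is assigned a strategy patterned on \eqref{ss} (for $n\in I$) and on \eqref{d} (for $n\in J$): it runs the parallel/matching maneuver toward the evader but with the target pulled back toward its own center $x_{n0}$ by a distance $\gamma$, so that in the limiting case it terminates at the point of $B_{\ell_2}(x_{n0},r_n)$ nearest to $y(\theta)$. By the reformulation of $\gamma$ there is an index $n_0$ with $d_{n_0}(y(\theta))\le\gamma$, i.e.\ $y(\theta)\in G_{n_0}(\gamma)$; for that pursuer the shifted maneuver gives $\|x_{n_0}(\theta)-y(\theta)\|\le\gamma$, hence $\inf_n\|x_n(\theta)-y(\theta)\|\le\gamma$. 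Admissibility of the shifted maneuver is where the hypotheses enter: for $i\in I$ it follows from the computation in Lemma \ref{110} applied to the $\gamma$-shifted target, while for $j\in J$ the condition $\sigma\theta\le\rho_j\theta+\gamma$ is precisely what lowers the effective evader resource so that the requirement $\sigma\le\rho$ of Lemma \ref{1} holds for the shifted geometric pursuers. This yields $\Gamma_1(U^{*})\le\gamma$, and responding to an arbitrary evader strategy with the corresponding matching controls gives $\Gamma_2(V)\le\gamma$ for all $V$. Together with the lower bound this forces $\Gamma_1(U^{*})=\Gamma_2(V^{*})=\gamma$.

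\textbf{Main obstacle.} The delicate point is the passage to the exact level $l=\gamma$. Since $\bigcup_n G_n(\gamma)$ is only a countable union of closed balls it need not be closed, so a priori $\inf_n d_n(z)$ may fail to be attained and the worst-case evader endpoint could escape $\bigcup_n G_n(\gamma)$ altogether; on the pursuit side the assignment of the capturing index $n_0$ must survive this. I expect Assumption (A) to be the essential tool: confining all initial pursuer positions to one side of $y_0$ fixes a coherent escape direction, permits extracting the optimal evader point by weak compactness, and guarantees that the relevant infimum is attained, so that a single pursuer $n_0$ genuinely realizes the bound $\gamma$. A secondary technical issue is the feedback consistency of \eqref{eq:strategypursuer}–\eqref{eq:strategyevader} (pursuers must commit to admissible strategies before $y(\theta)$ is revealed, and the matching controls depend on $v$), which I would resolve using the explicit solvability of the simple-motion equations that makes the controls in \eqref{ss} and \eqref{d} well defined.
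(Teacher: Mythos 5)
Your proposal has the right skeleton --- evasion to a far point for the lower bound, parallel pursuit with resource-scaled strategies for the upper bound, which is exactly the paper's architecture --- but both places where the argument must be pushed to the exact level $\gamma$ contain genuine gaps. On the pursuit side, your key step is: ``there is an index $n_0$ with $y(\theta)\in G_{n_0}(\gamma)$; for that pursuer the shifted maneuver gives $\|x_{n_0}(\theta)-y(\theta)\|\le\gamma$.'' This does not follow from Lemmas \ref{110} and \ref{1}: those lemmas guarantee capture only when the endpoint lies in the \emph{half-space} $X$, not merely in the pursuer's attainability ball, and the admissibility computation in Lemma \ref{110} uses the half-space inequality in an essential way. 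Membership in the ball does not imply membership in the half-space; e.g.\ a point of $S_{\ell_2}(x_{n_00},R_{n_0})$ lying strictly inside $B_{\ell_2}(y_0,\sigma\sqrt{\theta})$ violates the defining inequality of $X_{n_0}$, since that inequality is equivalent to $\|z-x_{n_00}\|^2-R_{n_0}^2\le\|z-y_0\|^2-\sigma^2\theta$. The bridge from ``the balls cover $B_{\ell_2}(y_0,\sigma\sqrt{\theta})$'' to ``the half-spaces cover it'' is precisely Lemma \ref{2}, and that is where Assumption (A) enters the pursuit half of the paper's proof; your upper bound invokes neither. A second, related omission: from $\gamma=\sup_z\inf_n d_n(z)$ you only get $\inf_n d_n(y(\theta))\le\gamma$, and since the infimum over countably many $n$ need not be attained, there may be no index $n_0$ with $d_{n_0}(y(\theta))\le\gamma$ at all. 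The paper circumvents this by giving the fictitious pursuers the $\varepsilon$-enlarged resources \eqref{pp}--\eqref{ppp}, obtaining $\|y(\theta)-x_{s(\varepsilon)}(\theta)\|\le\gamma+K\sqrt{\varepsilon}$ for every $\varepsilon>0$ while the real strategies \eqref{oo} stay independent of $\varepsilon$.

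On the evasion side, your bound $\Gamma_1\ge l$ for every $l<\gamma$ is fine, but the passage to an optimal evader strategy via weak compactness fails. If $\bar y_l\rightharpoonup\bar y^{*}$, the norm is only weakly \emph{lower} semicontinuous, i.e.\ $\|\bar y^{*}-x_{n0}\|\le\liminf_l\|\bar y_l-x_{n0}\|$; the inequality points the wrong way, and distances can genuinely collapse in the weak limit (take $\bar y_l=y_0+\sigma\sqrt{\theta}\,e_l$ with $(e_l)$ orthonormal, so $\bar y_l\rightharpoonup y_0$). Hence you cannot conclude $\|\bar y^{*}-x_{n0}\|\ge r_n+\gamma$, and ``the $\bar y_l$ accumulate along $p_0$'' is an expectation, not an argument. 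This exact difficulty is the content of Lemma \ref{3} (quoted from \cite{ibr1}): under Assumption (A) it produces a single point $\bar y\in S_{\ell_2}(y_0,\sigma\sqrt{\theta})$ with $\|\bar y-x_{n0}\|\ge r_n+\gamma$ for \emph{all} $n$ simultaneously, after which the constant control steering to $\bar y$ gives the paper's lower bound, exactly as in your $l<\gamma$ computation. So the two lemmas you omit (Lemma \ref{2} and Lemma \ref{3}), each powered by Assumption (A), are not technical conveniences but the precise tools that close the gaps you yourself flag in your ``Main obstacle'' paragraph.
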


The proof of Theorem \ref{theorem} relies on the following lemmas for which we let $y_0, x_{n0} \in \ell_2$ with $x_{n0}\neq y_0$ for $n \in \N$ and choose $r, R_n > 0$ for $n \in \N$.
\begin{lemma}[\cite{ibr1}]\label{2}
Let
\begin{equation*}
X_n=\Big\{z\in l_2:2(y_0-x_{n0},z)\le
R_n^2-r^2+\|y_0\|^2-\|x_{n0}\|^2\Big\}.
\end{equation*}
Under the assumption (A) and if
\begin{equation*}
  B_{\ell_2}(y_0,r) \subset
  \bigcup_{n\in \N} B_{\ell_2}( x_{n0},R_n ),
\end{equation*}
then $B_{\ell_2}(y_0,r) \subset \bigcup_{n\in \N} X_n$.
\end{lemma}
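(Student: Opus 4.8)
The plan is to work with the equivalent description of the half-space
\[
  X_n = \big\{ z \in \ell_2 : \|z - x_{n0}\|^2 - R_n^2 \le \|z - y_0\|^2 - r^2 \big\},
\]
obtained by expanding both squared norms in the defining inequality and cancelling the common $\|z\|^2$ term. In this form $z \in X_n$ says precisely that the \emph{power} of $z$ with respect to the pursuer ball $B_{\ell_2}(x_{n0}, R_n)$ does not exceed its power with respect to the evader ball $B_{\ell_2}(y_0, r)$, which makes the role of each hypothesis transparent.

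First I would settle the inclusion on the boundary sphere $S_{\ell_2}(y_0, r)$. For $z$ with $\|z - y_0\| = r$ the right-hand side above vanishes, so $z \in X_n$ is equivalent to $\|z - x_{n0}\|^2 \le R_n^2$, that is, to $z \in B_{\ell_2}(x_{n0}, R_n)$. Since $S_{\ell_2}(y_0,r) \subset B_{\ell_2}(y_0,r) \subset \bigcup_n B_{\ell_2}(x_{n0}, R_n)$ by the covering hypothesis, every sphere point lies in some covering ball and hence in the corresponding $X_n$, giving $S_{\ell_2}(y_0, r) \subset \bigcup_n X_n$.

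Passing from the sphere to the full ball is the crux, and it is exactly where Assumption (A) is used. Writing $a_n := y_0 - x_{n0}$, we have $X_n = \{z : (a_n, z) \le \tfrac12(R_n^2 - r^2 + \|y_0\|^2 - \|x_{n0}\|^2)\}$, and Assumption (A) supplies a nonzero $p_0$ with $(a_n, p_0) \ge 0$ for all $n$. Hence each $X_n$ is invariant under translation in the $-p_0$ direction: if $w \in X_n$ and $t \ge 0$, then $(a_n, w - t p_0) = (a_n, w) - t (a_n, p_0) \le (a_n, w)$, so $w - t p_0 \in X_n$. Given an arbitrary $z \in B_{\ell_2}(y_0, r)$, I would follow the ray $t \mapsto z + t p_0$; since $t \mapsto \|z + t p_0 - y_0\|^2$ is an upward parabola that is at most $r^2$ at $t = 0$ and tends to $+\infty$, it meets the sphere at some $z' = z + t^* p_0$ with $t^* \ge 0$. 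By the previous step $z' \in X_n$ for some $n$, and then $z = z' - t^* p_0 \in X_n$ by the invariance, so $B_{\ell_2}(y_0, r) \subset \bigcup_n X_n$.

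The main obstacle is precisely this last passage. The naive hope that $B_{\ell_2}(y_0,r) \cap B_{\ell_2}(x_{n0}, R_n) \subset X_n$ holds for each $n$ is false at interior points, where the power $\|z - y_0\|^2 - r^2$ is strictly negative; one cannot argue pointwise ball by ball. This is why both hypotheses are indispensable: the covering delivers the conclusion on the boundary sphere, while the common direction $p_0$ from Assumption (A) transports it inward to every point of the ball.
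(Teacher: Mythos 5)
Your proof is correct and complete: the algebraic equivalence of the two descriptions of $X_n$, the observation that on the sphere $S_{\ell_2}(y_0,r)$ membership in $B_{\ell_2}(x_{n0},R_n)$ is equivalent to membership in $X_n$, the intermediate-value argument producing $t^*\ge 0$ with $z+t^*p_0\in S_{\ell_2}(y_0,r)$, and the invariance of each half-space under translation by $-tp_0$, $t\ge 0$ (which is exactly where Assumption (A) enters) all check out. Note that the paper itself gives no proof of Lemma \ref{2} --- it is quoted from \cite{ibr1} --- and your argument is precisely the standard one from that reference, so there is nothing to reconcile between your route and the paper's.
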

\begin{lemma}[\cite{ibr1}]\label{3}
Let $R_0 := \inf_{n\in \mathbb{N}}R_n$. Under the assumption (A) and if $R_0 > 0$ and if for any $\varepsilon \in (0,R_0)$ the set $\bigcup_{n\in \mathbb{N}} B_{\ell_2}(x_{n0},R_n-\varepsilon)$ does not contain the ball
$B_{\ell_2}(y_0,r)$, then there exists a point $\bar y\in S_{\ell_2}(y_0,r)$ such
that $\|\bar y-x_{n0}\|\ge R_n$ for all $n\in \N$.
\end{lemma}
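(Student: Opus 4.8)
The plan is to run a weak-compactness limiting argument, but with a twist: instead of trying to preserve the distance lower bounds $\|z-x_{n0}\|\ge R_n$ directly through a weak limit (which fails), I would first recast them as \emph{linear} lower bounds, which are stable under weak convergence. Concretely, the hypothesis provides, for each $m$ with $\varepsilon_m:=R_0/(m+1)\in(0,R_0)$, a point $z_m\in B_{\ell_2}(y_0,r)$ escaping the shrunken cover, so that $\|z_m-x_{n0}\|>R_n-\varepsilon_m$ for every $n\in\N$. First I would record the elementary identity, valid for every $z\in\ell_2$,
\begin{equation*}
  2(y_0-x_{n0},z)=\|z-x_{n0}\|^2-\|z-y_0\|^2+\|y_0\|^2-\|x_{n0}\|^2,
\end{equation*}
and combine it with $\|z_m-y_0\|\le r$ and $\|z_m-x_{n0}\|^2>(R_n-\varepsilon_m)^2$ to obtain the linear inequality
\begin{equation*}
  2(y_0-x_{n0},z_m)>(R_n-\varepsilon_m)^2-r^2+\|y_0\|^2-\|x_{n0}\|^2\qquad(n\in\N).
\end{equation*}

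Next, since $\|z_m\|\le\|y_0\|+r$, the sequence $(z_m)$ is bounded, so by reflexivity of $\ell_2$ I would pass to a weakly convergent subsequence $z_{m_k}\rightharpoonup\bar y$. Because $B_{\ell_2}(y_0,r)$ is closed and convex, it is weakly closed, whence $\bar y\in B_{\ell_2}(y_0,r)$. The crucial point is that for each fixed $n$ the functional $z\mapsto(y_0-x_{n0},z)$ is weakly continuous, so letting $k\to\infty$ (and using $\varepsilon_{m_k}\to0$) in the displayed linear inequality yields
\begin{equation*}
  2(y_0-x_{n0},\bar y)\ge R_n^2-r^2+\|y_0\|^2-\|x_{n0}\|^2\qquad(n\in\N),
\end{equation*}
i.e.\ $\bar y$ lies in the complement of the interior of every half-space $X_n$ of Lemma \ref{2}. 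This is exactly where the linear reformulation pays off: the analogous distance inequalities $\|\bar y-x_{n0}\|\ge R_n$ would \emph{not} survive the weak limit, since the norm is only weakly lower semicontinuous. This mismatch is the main obstacle, and bending the constraints into half-spaces is what removes it.

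Finally I would push $\bar y$ out to the sphere while keeping all the linear inequalities, using Assumption (A). Writing $e:=p_0/\|p_0\|$, Assumption (A) gives $(y_0-x_{n0},e)\ge0$ for all $n$, so for every $t\ge0$ the shifted point satisfies $2(y_0-x_{n0},\bar y+te)\ge 2(y_0-x_{n0},\bar y)$, hence the same lower bounds. Since $t\mapsto\|\bar y+te-y_0\|^2$ is continuous, equals $\|\bar y-y_0\|^2\le r^2$ at $t=0$, and tends to $\infty$, the intermediate value theorem furnishes $t\ge0$ with $\bar y^*:=\bar y+te\in S_{\ell_2}(y_0,r)$. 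For this $\bar y^*$ on the sphere, $\|\bar y^*-y_0\|=r$, so rearranging the identity above together with the linear bounds gives $\|\bar y^*-x_{n0}\|^2\ge R_n^2$ for every $n$, and $\bar y^*$ is the required point. The only parts needing care are the correct (strict versus non-strict) handling of the inequalities through the weak limit and the verification that $t\mapsto\|\bar y+te-y_0\|^2$ actually attains $r^2$, both of which are routine.
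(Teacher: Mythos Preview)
The paper does not prove this lemma; it is quoted from \cite{ibr1} without proof, so there is no in-paper argument to compare against directly.

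Your argument is correct. The reduction of the metric constraints $\|z_m-x_{n0}\|>R_n-\varepsilon_m$ to the linear constraints
\[
  2(y_0-x_{n0},z_m)>(R_n-\varepsilon_m)^2-r^2+\|y_0\|^2-\|x_{n0}\|^2
\]
via the polarization identity, followed by a weak-compactness limit in the bounded convex set $B_{\ell_2}(y_0,r)$ and a final translation along $e=p_0/\|p_0\|$ to reach $S_{\ell_2}(y_0,r)$, is exactly the right mechanism, and each step is justified (in particular $p_0\neq0$ from Assumption~(A) ensures the IVT step goes through). Your diagnosis that the distance lower bounds themselves would not survive the weak limit, while their half-space reformulations do, is the crux of the matter. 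Given that the companion Lemma~\ref{2}, also taken from \cite{ibr1}, is phrased precisely in terms of these same half-spaces $X_n$, it is very likely that the original proof in \cite{ibr1} proceeds along the same lines as yours.
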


\begin{proof}[Proof of Theorem \ref{theorem}]
We assume that
\begin{equation}
  K
  =
  \big\{ n \in \N : S_{\ell_2}(y_0, \sigma\sqrt{\theta}) \cap G_n(\gamma) \neq \emptyset \big\},
\end{equation}
and consider the infinitely many initial value problems
\begin{equation}\label{aaaaa}
  \begin{array}{llrl}
   (P_k):& \dot x_k=u_k(t),& x_k(0)=x_{k0},& k \in K,
  \\[1ex]
    (P_{\hat{k}}):& \dot x_{\hat{k}}=0,& & {\hat k} \in \N \setminus  K,
    \\[1ex]
    (E):& \dot y=v(t),&y(0)=y_0,&
  \end{array}
\end{equation}
on the interval $[0, \theta]$. The value $\gamma$ of this game is the same
as for the game (\ref{aa}). Indeed by the definition of $\gamma$, we
have
\begin{equation*}\label{q1}
  B_{\ell_2}\big(y_0,\sigma\sqrt{\theta}\big)
  \subset
  \bigcup_{n\in \N} G_n(\gamma),
\end{equation*}
therefore
\begin{equation*}\label{q2}
  B_{\ell_2}\big(y_0,\sigma\sqrt{\theta}\big)
  \subset
  \bigcup_{n\in K} G_n(\gamma).
\end{equation*}
For arbitrary $\varepsilon > 0$ we introduce fictitious pursuers $z_n,$ whose motions are
described by the equations
\begin{equation*}
  \dot{z}_n
  =
  w_n^\varepsilon, \quad z_n(0)=x_{n0}, \qquad \text{for } n \in \N=I\cup J,
\end{equation*}
with integral constraint on $I$
\begin{equation}\label{pp}
\begin{split}
  & \|w^{\varepsilon}_i\|_{L_2}
  =
  \Big(\int_0^\theta\|w_i^{\varepsilon}(t)\|^2\,dt \Big)^{1/2}
  \le
  \bar{\rho_i}(\varepsilon)
  :=
  \rho_i+\frac{\gamma}{\sqrt{\theta}}+\frac{\varepsilon}{k_i\sqrt{\theta}},
  \qquad \text{for } i\in I,
\end{split}
\end{equation}
and with geometric constraint on $J$
\begin{equation}\label{ppp}
\begin{split}
  &\|w^{\varepsilon}_j\|_{L_\infty}
  =
  \max_{t \in [0, \theta]}
  \Big(\sum_{k=1}^{\infty}w_{jk}(t)^2 \Big)^{1/2}
  \le
  \bar{\rho_j}(\varepsilon)
  :=
  \rho_j+\frac{\gamma}{\theta}+\frac{\varepsilon }{k_j\theta},
  \qquad \text{for } j \in J,
\end{split}
\end{equation}
where $k_i=\max\{1,\rho_i\},$ and $k_j=\max\{1,\rho_j\}$. It is obvious
that the attainability domains of the fictitious pursuers $z_i$, $i \in I$,
and $z_j$, $j \in J$, at time $\theta$ from the initial states $x_{i0}$ and
$x_{j0}$ are the balls
$B_{\ell_2}(x_{i0},\bar{\rho_i}(\varepsilon)\sqrt{\theta})
=B_{\ell_2}(x_{i0},\rho_i\sqrt{\theta}+\gamma+\tfrac{\varepsilon}{k_i})$
and
$B_{\ell_2}(x_{j0},\bar{\rho_j}(\varepsilon)\theta)=
B_{\ell_2}(x_{j0},\rho_j\theta+\gamma+\tfrac{\varepsilon}{k_j})$.

It can be shown that the attainability domain of the fictitious
pursuers from the initial position $x_{n0}$ up to time $\theta$ is
a ball $G_n(\gamma)$. We define $\widetilde{I}=I\cap K$ and
$\widetilde{J}=J\cap K$ and strategies of the fictitious pursuers
$z_i$ for $i\in \widetilde{I}$ as follows:
\begin{equation}
w_i^{\varepsilon}(t)=\begin{cases}\label{kkk}
                       \dfrac{1}{\theta}(y_0-x_{i0})+v(t), &\text{if $0\leq t\leq
                       \tau_i^{\varepsilon}$},\\
                       0, &\text{if $\tau_i^{\varepsilon}< t\leq \theta$},
                       \end{cases}
\end{equation}
where $\tau_i^{\varepsilon} \in [0, \theta]$ is the time for which
\begin{equation}
\int_0^{\tau_i^{\varepsilon}}\|w_i^{\varepsilon}(s)\|^2ds=\bar\rho_i^2(\varepsilon),
\end{equation}
if such a time exists. Since $\bar\rho_i(\varepsilon)>\bar\rho_i(0):=\bar\rho_i$, we get
\begin{equation*}
\int_0^{\tau_i^{\varepsilon}}\|w_i^{\varepsilon}(s)\|^2ds=\bar\rho_i^2(\varepsilon)>\bar\rho_i^2=\int_0^{\tau_i}\|w_i^0(s)\|^2ds:=\int_0^{\tau_i}\|w_i(s)\|^2ds,
\end{equation*}
that is,
\begin{equation*}
  \int_0^{\tau_i^{\varepsilon}}\big\|\frac{y_0-x_{i0}}{\theta}+v(t)\big\|^2\,ds
  >
  \int_0^{\tau_i}\big\|\frac{y_0-x_{i0}}{\theta}+v(t)\big\|^2\,ds,
\end{equation*}
hence $\tau_i^{\varepsilon}>\tau_i$.

Also we define the strategies of the fictitious pursuers  $z_j$
for $j\in \widetilde{J}$ as follows:
\begin{equation}
w_j^{\varepsilon}(t)=\begin{cases}\label{kkkk}
                       v(t)-\left(v(t),e_j\right)e_j+e_j\left(\bar\rho_j^2(\varepsilon)-\sigma^2+\left(v(t),e_j\right)^2\right)^{1/2}, \quad &\text{if \quad $0\le t\le \tau_j$},\\
                       v(t), \quad &\text{if \quad $\tau_j\le t\le \theta$},
                       \end{cases}
\end{equation}
where $e_j=(y_0-x_{j0})/\|y_0-x_{j0}\|$ and $\tau_j \in [0, \theta]$ is the time at which
$z_j(\tau_j)=y(\tau_j)$ for the first time, if it exists.

Now we define the strategies of the pursuers $x_i$, $i\in
\widetilde{I}$, and $x_j$, $j\in \widetilde{J}$, by the strategies
of the fictitious pursuers as follows:
\begin{equation}\label{oo}
u_n(t)=\frac{\rho_n \theta^\xi}{\rho_n\theta^\xi+\gamma}w_n(t),
\quad \quad 0\le t \le \theta,
\end{equation}
\begin{equation*}
  \text{with} \quad \xi =
  \left\{
  \begin{array}{ll}
    \frac{1}{2}, & \text{if } n \in \tilde{I},
  \\
    1, & \text{if } n \in \tilde{J},
  \end{array}
  \right.
\end{equation*}
as well as $u_n(t)=0$ for $t \in [0, \theta]$ and $n\in \N\setminus K$.

We show now that constructed strategies $u_i(t)=\dfrac{\rho_i\sqrt{\theta}}{\rho_i\sqrt{\theta}+\gamma}w_i(t)$ in  \eqref{oo} for the pursuers $P_i$, $i\in \widetilde{I}$, satisfy the inequalities
\begin{equation}\label{m}
  \sup_{v \in \mathcal{V}_c} \inf_{i\in \widetilde{I}} \|y(\theta)-x_i(\theta)\|
  \le
  \gamma.
\end{equation}
By the definition of $\gamma$, we have
$$
  B_{\ell_2}\big(y_0,\sigma\sqrt{\theta}\big)
  \subset
  \bigcup_{i\in \widetilde{I}}
  B_{\ell_2}\big(x_{i0},\rho_i\sqrt{\theta}+\gamma+\frac{\varepsilon}{k_i}\big).
$$
By assumption, the inequality $(y_0-x_{i0},p_0)\ge0$ holds for
all $i\in \widetilde{I}.$ Then it follows from Lemma \ref{2} that
\begin{equation*}
  B_{\ell_2}\big(y_0,\sigma\sqrt{\theta}\big)
  \subset
  \bigcup_{i\in \widetilde{I}} X_i^\varepsilon,
\end{equation*}
where
\begin{equation*}
  X_i^\varepsilon
  =
  \Big\{z\in \ell_2 :
  2(y_0-x_{i0},z) \le
  \big(\rho_i\sqrt{\theta}+\gamma+\frac{\varepsilon}{k_i}\big)^2 - \sigma^2\theta+\|y_0\|^2-\|x_{i0}\|^2\Big\}.
\end{equation*}
Consequently, the point $y(\theta)\in B_{\ell_2}(y_0,
\sigma\sqrt{\theta})$ belongs to some half-space
$X_s^{\varepsilon}, s=s(\varepsilon)\in \widetilde{I}$, and we
have
\begin{equation}\label{ddd}
  2(y_0-x_{s0}, y(\theta))
  \leq
  \big(\rho_s\sqrt{\theta}+\gamma+\frac{\varepsilon}{k_s}\big)^2-\sigma^2\theta+\|y_0\|^2-\|x_{s0}\|^2.
\end{equation}
By Lemma \ref{110} for the strategies (\ref{kkk}) of fictitious
pursuers we get $z_s(\theta)=y(\theta)$. Then taking into account
(\ref{pp}) and (\ref{oo}) with $\xi=\frac{1}{2}$, we get
\begin{equation}\label{nn}
 \begin{split}
  \|y(\theta)-x_s(\theta)\|
  &=
  \|z_s(\theta)-x_s(\theta)\|
  =
  \Big\|\,\int_0^\theta\left(w_s^\varepsilon(t)-\frac{\rho_s}{\bar\rho_s}w_s(t)\right)dt\Big\|
\\
  &\le
  \int_0^\theta\|w_s^\varepsilon(t)-w_s(t)\|\,dt+\int\limits_0^\theta\|w_s(t)-\frac{\rho_s}{\bar\rho_s}w_s(t)\|\,dt.
 \end{split}
\end{equation}
Now we put aside the right-hand side of the last inequality. Let
us show that
\begin{equation}\label{o}
\int\limits_0^\theta\|w_i^\varepsilon(t)-w_i(t)\|dt\leq
K_1\sqrt{\varepsilon},\quad  \text{for all} \quad i\in
\widetilde{I}.
\end{equation}
Indeed, as we show that $\tau_i^{\varepsilon}>\tau_i$ and
according to (\ref{kkk}) $w_i^\varepsilon(t)=w_i(t)$ for $0\le
t\le \tau_i$, $w_i(t)=0$ for $t>\tau_i$, $w_i^\varepsilon(t)=0$
for $t>\tau_i^{\varepsilon}$, then we have
\begin{equation*}
\begin{split}
  \int_0^\theta\|w_i^{\varepsilon}(t)-w_i(t)\|dt
  &=
  \int_0^{\tau_i}\|w_i^{\varepsilon}(t)-w_i(t)\|dt +
  \int_{\tau_i}^{\tau_i^{\varepsilon}}\|w_i^\varepsilon(t)-w_i(t)\|dt
\\
  &{} + \int_{\tau_i^{\varepsilon}}^\theta\|w_i^{\varepsilon}(t)-w_i(t)\|dt
\\
  &=
  \int_{\tau_i}^{\tau_i^{\varepsilon}}\|w_i^{\varepsilon}(t)\|dt
  \leq
  \sqrt{\tau_i^{\varepsilon}-\tau_i}\Big(\int_{\tau_i}^{\tau_i^{\varepsilon}}\|w_i^{\varepsilon}(t)\|^2dt\Big)^{1/2}
\\
  &\leq
  \sqrt{\theta}\Big(\int_{0}^{\tau_i^{\varepsilon}}\|w_i^{\varepsilon}(t)\|^2dt -
  \int_{0}^{\tau_i}\|w_i^{\varepsilon}(t)\|^2dt\Big)^{1/2}
\\
  &=
  \sqrt{\theta}\Big(\frac{\varepsilon}{k_i\sqrt{\theta}}\Big(2\rho_i+\frac{2\gamma}{\sqrt{\theta}} +
  \frac{\varepsilon}{k_i\sqrt{\theta}}\Big)\Big)^{1/2}
\\
  &\leq
  K_1\sqrt{\varepsilon},
\end{split}
\end{equation*}
where $K_1$ is some positive number.

For the second integral in (\ref{nn}) we have
\begin{equation*}
\begin{split}
  \Big\|\int_0^{\theta}\Big(1-\frac{\rho_s}{\bar\rho_s}\Big)w_s(t)\Big\|
  &\leq
  \Big(1-\frac{\rho_s}{\bar\rho_s}\Big) \int_0^{\theta}\|w_s(t)\|dt
\\
  &\leq
  \Big(1-\frac{\rho_s}{\bar\rho_s}\Big)
  \Big(\int_0^\theta dt\Big)^{1/2}
  \Big( \int_0^\theta\|\,w_s(t)\|^{2}dt\Big)^{1/2}
\\
  &\leq
  \Big(1-\frac{\rho_s}{\bar\rho_s}\Big)\sqrt{\theta}\bar\rho_s=\gamma.
\end{split}
\end{equation*}
Then from (\ref{nn}) it follows that $\|y(\theta)-x_s(\theta)\|\leq
\gamma+K_1\sqrt{\varepsilon}$. Thus, if the pursuers use the
strategies (\ref{oo}) with $\xi=\frac{1}{2}$, the inequality (\ref{m}) holds.

Now we show that the strategies $u_j(t)=\dfrac{\rho_j\theta}{\rho_j\theta+\gamma}w_j(t)$ constructed in \eqref{oo} of the pursuers $P_j$ for $j\in \widetilde{J}$ satisfy the inequality
\begin{equation}\label{mm}
  \sup_{v \in \mathcal{V}_c} \inf_{j\in \widetilde{J}}
  \|y(\theta)-x_j(\theta)\|
  \le
  \gamma.
\end{equation}
By the definition of $\gamma$, we have
\begin{equation*}
  B_{\ell_2}\big(y_0,\sigma\sqrt{\theta}\big)
  \subset
  \bigcup_{j\in \widetilde{J}}
  B_{\ell_2}\big(x_{j0},\rho_j\theta+\gamma+\frac{\varepsilon}{k_j}\big).
\end{equation*}
By assumption, the inequality $(y_0-x_{j0},p_0)\ge0$ holds for
all $j\in \widetilde{J}.$ Then it follows from Lemma \ref{2} that
\begin{equation*}
  B_{\ell_2}\big(y_0,\sigma\sqrt{\theta}\big)
  \subset
  \bigcup_{j\in\widetilde{J}} X_j^\varepsilon,
\end{equation*}
where
\begin{equation*}
  X_j^\varepsilon
  =
  \Big\{ z \in \ell_2 : 2(y_0-x_{j0},z) \le
  \big(\rho_j\theta+\gamma+\frac{\varepsilon}{k_j}\big)^2-
  \sigma^2\theta+\|y_0\|^2-\|x_{j0}\|^2\Big\}.
\end{equation*}
Consequently, the point $y(\theta)\in
B(y_0,\sigma\sqrt{\theta})$ belongs to some half-space
$X_s^\varepsilon,$ $s=s(\varepsilon)\in \widetilde{J}.$ By the
assumption of the Theorem \ref{theorem},
$\bar\rho_j(\varepsilon)>\sigma;$ then it follows from Lemma
\ref{1} that if $z_j$ uses strategies $(\ref{kkkk})$ then
$z_s(\theta)=y(\theta).$ By taking account of $(\ref{oo})$ with
$\xi=1,$ we obtain
\begin{equation}\label{n}
  \begin{split}
  \|y(\theta) - x_s(\theta)\|
  &=
  \|z_s(\theta) - x_s(\theta)\|
  =\Big\|\int_0^\theta\big(w_s^\varepsilon(t)-\frac{\rho_s}{\bar\rho_s}w_s(t)\big)\,dt\Big\|
\\
  &\le
  \int_0^\theta\|w_s^\varepsilon(t)-w_s(t)\|\,dt +
  \int_0^\theta\|w_s(t)-\frac{\rho_s}{\bar\rho_s}w_s(t)\|\,dt.
 \end{split}
\end{equation}
Now we show that the first term of the right-hand side of the inequality satisfies
\begin{equation}\label{o}
\lim_{\varepsilon\to0}\sup_{j\in
\widetilde{J}}\int_0^\theta\|w_j^\varepsilon(t)-w_j(t)\|dt=0.
\end{equation}

 If there exists
$\tau_j\in[0,\theta],$ mentioned in (\ref{kkkk}), then
\begin{equation*}
 \begin{split}
   &
  \int_0^{\theta}\|w_j^\varepsilon(t)-w_j(t)\|dt
 \\
  &=
  \int_0^{\tau_j}\Big(\big( (\bar\rho_j^2(\varepsilon)-\sigma^2)+(v(t),e_j)^2\big)^{1/2} -
  \big((\bar\rho_j^2-\sigma^2)+(v(t),e_j)^2\big)^{1/2}\Big)^2\,dt
\\
  &\leq
  \int_0^{\tau_j}\big((\bar\rho_j^2(\varepsilon)-\sigma^2)^{1/2} -
  (\bar\rho_j^2-\sigma^2)^{1/2}\big)^2\,dt
\\
  &\leq
  \int_0^{\theta}\big((\bar\rho_j^2(\varepsilon)-\sigma^2)^{1/2} -
  (\bar\rho_j^2-\sigma^2)^{1/2}\big)^2\,dt
\\
  &=
  \theta
  \big((\bar\rho_j^2(\varepsilon)-\sigma^2)^{1/2} -
  (\bar\rho_j^2-\sigma^2)^{1/2}\big)^2
\\
  &\leq
  \theta
  \big(\big(\bar\rho_j^2-\sigma^2+\big(\frac{\varepsilon}{k_j\theta}\big)^2+
  \frac{2\bar\rho_j\varepsilon}{k_j \theta}\big)^{1/2}-
  (\bar\rho_j^2-\sigma^2)^{1/2}\big)^2
\\
  &\leq
  \theta
  \big(\big(\frac{\varepsilon}{k_j\theta}\big)^2+\frac{2\bar\rho_j\varepsilon}{k_j\theta}\big)
  \leq
  \varepsilon\big(\frac{\varepsilon}{\theta}+2\big(1+\frac{\gamma}{\theta}\big)\big)
  =
  K_2 \varepsilon,
  \end{split}
\end{equation*}
where $K_2$ is some positive number.

For the second integral in \eqref{n} we have
\begin{equation*}
  \begin{split}
  \int_0^\theta \big\| \big(1-\frac{\rho_s}{\bar\rho_s}\big) w_s(t) \big\|\,dt
  &=
  \big(1-\frac{\rho_s}{\bar\rho_s}\big)
  \int_0^\theta\|\,w_s(t)\|\, dt
\\
  &\le
  \big(1-\frac{\rho_s}{\bar\rho_s}\big)\bar\rho_s\theta
  =
  \gamma.
 \end{split}
\end{equation*}
Then it follows from $(\ref{n})$ that
$\|y(\theta)-x_s(\theta)\|\le\gamma+K_2\varepsilon$.
Thus if the pursuers use strategies $(\ref{oo})$ with $\xi=1,$
the inequality $(\ref{mm})$ holds.

We construct the evader's strategies ensuring that
\begin{equation}\label{p1}
\inf_{\substack{u_n \in \mathcal{U}_c^{(n)}\\ \text{for } n \in K}}
\inf_{i\in \widetilde{I}}\|y(\theta)-x_i(\theta)\|\ge\gamma,
\end{equation}
and
\begin{equation}\label{p2}
\inf_{\substack{u_n \in \mathcal{U}_c^{(n)}\\ \text{for } n \in K}}
\inf_{j\in \widetilde{J}}\|y(\theta)-x_j(\theta)\|\ge\gamma,
\end{equation}
First, if $\gamma=0,$ then inequality $(\ref{p1})$ is obviously
valid for any admissible control of the evader. Let $\gamma>0$. By
the definition of $\gamma,$ for any $\varepsilon>0,$ the set
\begin{equation*}
  \bigcup_{i\in \widetilde{I}}
  B_{\ell_2}\big(x_{i0},\rho_i\sqrt{\theta}+\gamma-\varepsilon\big),
\end{equation*}
does not contain the ball $B_{\ell_2}(y_0,\sigma\sqrt{\theta})$.
Then, by Lemma \ref{3} there exists a point $\bar y\in
S_{\ell_2}(y_0,\sigma\sqrt{\theta})$, such that $\|\bar
y-x_{i0}\|\ge\rho_i\sqrt{\theta}+\gamma.$ On the other hand
\begin{equation*}
\|x_i(\theta)-x_{i0}\|\leq\rho_i\sqrt{\theta}.
\end{equation*}
Consequently
\begin{equation*}
\|\bar y-x_i(\theta)\|\ge\|\bar
y-x_{i0}\|-\|x_i(\theta)-x_{i0}\|\ge\rho_i\sqrt{\theta}+\gamma-\rho_i\sqrt{\theta}=\gamma.
\end{equation*}
Then the value of the game is not less than $\gamma,$ and
inequality $(\ref{p1})$ holds.
Similarly these relationships can be  proved for $j\in \widetilde{J}$.

Therefore
\begin{equation*}
  \sup_{v \in \mathcal{V}_c} \inf_{n\in K} \|y(\theta)-x_n(\theta)\|
  \leq
  \gamma,
\end{equation*}
and
\begin{equation*}
  \inf_{\substack{u_n \in \mathcal{U}_c^{(n)}\\ \text{for } n \in K}}\,
  \inf_{n\in K}\|y(\theta)-x_n(\theta)\|
  \ge
  \gamma.
\end{equation*}
The proof of the theorem is complete.
\end{proof}
Now, we provide an example to illustrate Theorem \ref{theorem}.
\begin{example}
Let $\rho_i=2$, $\rho_j=1$, $\theta=9$ and $\sigma=2$. We consider
the following initial positions $ x_{i0}=(0,\ldots,0,3,0,\ldots)$,
$ x_{j0}=(0,\ldots,0,8,0,\ldots)$ and $y_0=(0,0,\ldots)$ of the
players, where number $3$ is $i$th coordinate of the point
$x_{i0}$ and number $8$ is $j$th coordinate of the point $x_{j0}$.
Let us obtain the value of the
game. To this end, it is sufficient to show that\\
1) for any $\varepsilon>0$ the inclusion $ B_{\ell_2}(O,6)\subset
\bigcup_{i=1}^{\infty}B_{\ell_2}(x_{i0},6.7+\frac{\varepsilon}{k_i})$
holds, where $O$
is the origin and $k_i=\max\{1,\rho_i\}=2$.\\
$\text{1}^\star$) for any $\varepsilon>0$ the inclusion $
B_{\ell_2}(O,6)\subset
\bigcup_{j=1}^{\infty}B_{\ell_2}(x_{j0},10+\frac{\varepsilon}{k_j})$ holds, where $k_j=\max\{1,\rho_j\}=1$.\\
2) the ball $B_{\ell_2}(O,6)$ is not contained in the set
$\bigcup_{i=1}^{\infty}B_{\ell_2}(x_{i0},6.7)$.\\
$\text{2}^\star$) the ball $B_{\ell_2}(O,6)$ is not contained in
the set $\bigcup_{j=1}^{\infty}B_{\ell_2}(x_{j0},10)$.

Let $z=(z_1,z_2,\ldots)$ be an arbitrary point of the ball
$B_{\ell_2}(O,6)$. So $\sum_{i=1}^{\infty}z_i^2\leq36$ and
$\sum_{j=1}^{\infty}z_j^2\leq36$. Then, either the vector $z$ has
a nonnegative coordinate or all the coordinate of the vector $z$
are negative. In the former case, $z$ has a nonnegative coordinate
$z_k$. Then,
\begin{equation*}
\begin{split}
\|z-x_{k0}\|&=\left(z_1^2+\ldots+z_{k-1}^2+(3-z_k)^2+z_{k+1}^2+\ldots\right)^{1/2}\\
&=\left(\sum_{i=1}^{\infty}z_i^2+9-6z_k\right)^{1/2}\\
&\leq(45-3z_k)^{1/2}\leq6.7< 6.7+\frac{\varepsilon}{2}.
\end{split}
\end{equation*}
Hence, $z\in B_{\ell_2}(x_{k0},6.7+\frac{\varepsilon}{2})$.\\
And, $z$ has a nonnegative coordinate $z_{k'}$. Then,
\begin{equation*}
\begin{split}
\|z-x_{k^{'}0}\|&=\left(\sum_{j=1}^{\infty}z_j^2+64-16z_{k'}\right)^{1/2}\\
&\leq(100-16z_{k^{'}})^{1/2}\leq10< 10+\varepsilon.
\end{split}
\end{equation*}
Therefore, $z\in B_{\ell_2}(x_{k^{'}0},10+\varepsilon)$.\\ In the
latter case, since $\sum_{i=1}^{\infty}z_i^2$ and
$\sum_{j=1}^{\infty}z_j^2$ are convergent then $z_k\rightarrow0$
as $k\rightarrow\infty$ and $z_{k'}\rightarrow0$ as
$k'\rightarrow\infty$ therefore
\begin{equation*}
\|z-x_{k0}\|=\left(\sum_{i=1}^{\infty}z_i^2+9-6z_k\right)^{1/2}\leq(45-6z_k)^{1/2}<6.7+\frac{\varepsilon}{2},
\end{equation*}
and
\begin{equation*}
\|z-x_{k^{'}0}\|=\left(\sum_{j=1}^{\infty}z_j^2+64-16z_{k'}\right)^{1/2}\leq(100-16z_{k'})^{1/2}<10+\varepsilon,
\end{equation*}
 for any index $k$ and $k'$. On the other hand, any point $z\in S(O,6)$
with negative coordinate does not belong to the set
$\bigcup_{i=1}^{\infty}B_{\ell_2}(x_{i0},6.7)$ and
$\bigcup_{j=1}^{\infty}B_{\ell_2}(x_{j0},10)$, since for any
numbers $i$ and $j$
\begin{equation*}
\|z-x_{i0}\|=(45-6z_i)^{1/2}>6.7.
\end{equation*}
and
\begin{equation*}
\|z-x_{j0}\|=(100-16z_j)^{1/2}>10.
\end{equation*}
So, we have
\begin{equation*}
\begin{split}
&\inf\biggl\{l\ge0:~B_{\ell_2}(y_0,\sigma\sqrt{\theta})\subset\bigcup_{i=1}^{\infty}B_{\ell_2}(x_{i0},\rho_i\sqrt{\theta}+l)\biggr\}=\\
&\inf\biggl\{l\ge0:~B_{\ell_2}(O,6)\subset\bigcup_{i=1}^{\infty}B_{\ell_2}(x_{i0},6+l)\biggr\}=0.7,
\end{split}
\end{equation*}
and
\begin{equation*}
\begin{split}
&\inf\biggl\{l\ge0:~B_{\ell_2}(y_0,\sigma\sqrt{\theta})\subset\bigcup_{j=1}^{\infty}B_{\ell_2}(x_{j0},\rho_j\theta+l)\biggr\}=\\
&\inf\biggl\{l\ge0:~B_{\ell_2}(O,6)\subset\bigcup_{j=1}^{\infty}B_{\ell_2}(x_{j0},9+l)\biggr\}=1.
\end{split}
\end{equation*}
Therefore, the number
\begin{equation*}
\gamma=\min\biggl\{0.7,~~ 1\biggr\}=0.7,
\end{equation*}
is the value of the game.
\end{example}

 {\bf Conclusion.}
We considered a fixed duration pursuit-evasion problem with
countably many pursuers and one evader in the Hilbert space. The
controls of a group of pursuers are subject to geometric
constraints and the controls of the other pursuers and the evader
are subject to integral constraints. We fixed the index $i$ on
pursuers and constructed an admissible strategy for the pursuer
that guaranties it to capture the evader. Moreover, by taking
contribution from an auxiliary differential game under an
important assumption we guessed the value of the game and then we
proved the accuracy of our guess.


\bibliographystyle{amsmath}
\bibliographystyle{amsmath}

\end{document}